\newcommand{\ee}{\varepsilon}
\newcommand{\defeq}{\vcentcolon=}
\newcommand{\definedby}{=\vcentcolon}
\DeclareMathOperator{\N}{{\mathbb N}}
\newcommand{\Rd}{{\mathbb R^n}}
\DeclareMathOperator{\supp}{supp}
\DeclareMathOperator*{\esssup}{ess\,sup}
\newcommand*\diff{\mathop{}\!\mathrm{d}}
\newcommand{\vertiii}[1]{{\left\vert\kern-0.25ex\left\vert\kern-0.25ex\left\vert #1
		\right\vert\kern-0.25ex\right\vert\kern-0.25ex\right\vert}}
\newtheorem{theorem}{Theorem}[section]
\theoremstyle{definition}
\newtheorem{remark}{Remark}%
\numberwithin{equation}{section}
\newcommand{\bc}{\color{blue}}
\newcommand{\R}{\mathbb{R}}
\newcommand{\veps}{\varepsilon}
\newcommand{\dd}[1]{\diff{#1}}
\definecolor{darkblue}{rgb}{0.05, .05, .65}
\definecolor{darkgreen}{rgb}{0.1, .65, .1}
\definecolor{darkred}{rgb}{0.8,0,0}
\renewcommand*{\@fnsymbol}[1]{\ensuremath{\ifcase#1\or \star \or \dagger\or \ddagger\or
		\mathsection\or \mathparagraph\or \|\or **\or \dagger\dagger
		\or \ddagger\ddagger \else\@ctrerr\fi}}
\title{ Estimates on translations and Taylor expansions  \\ in fractional Sobolev spaces}
\author{Félix del Teso\thanks{Dpto.\ de Análisis Matemático y Matemática Aplicada, Universidad Complutense de Madrid. \href{mailto:felix.delteso@ucm.es}{fdelteso@ucm.es}} %
	\and David Gómez-Castro\footnotemark[1]\
	\thanks{Instituto de Matemática Interdisciplinar, Universidad Complutense de Madrid. \href{mailto:dgcastro@ucm.es}{dgcastro@ucm.es}} \and %
	Juan Luis Vázquez\thanks{Departamento de Matemáticas, Universidad Autónoma de Madrid. \href{mailto:juanluis.vazquez@uam.es}{juanluis.vazquez@uam.es}}}
\newcounter{countersaver}
\begin{document}
	\maketitle

\begin{abstract} \noindent
	In this paper we study how the (normalised) Gagliardo semi-norms $[u]_{W^{s,p} (\Rd)}$ 	control translations. 	In particular, we prove that $\| u(\cdot + y)  - u  \|_{L^p (\Rd)} \le C [ u ] _{W^{s,p} (\Rd)} |y|^s$ 	for $n\geq1$, $s \in [0,1]$ and $p \in [1,+\infty]$, where $C$ depends only on $n$. We then obtain a corresponding higher-order version of this result: we get fractional rates of the error term in the Taylor expansion. We also  present relevant implications of our two results. First, we obtain a direct proof of several compact embedding of $W^{s,p}(\R^n)$ where the Fréchet-Kolmogorov Theorem is applied with  known rates. We also derive fractional rates of convergence of the convolution of a function with suitable mollifiers. Thirdly, we obtain  fractional rates of convergence of finite-difference discretizations for $W^{s,p} (\Rd)$.

	\

	\noindent {\bf 2020 Mathematics Subject Classification.}
46E35, %
26D10. %

	\
	
	\noindent {\bf Keywords: } Fractional Sobolev spaces, Gagliardo norms, translation estimates, Taylor expansions, discretization error, interpolation, convolution.
\end{abstract}

\maketitle

\section{Introduction and main results}\label{sec:intro}

It is a classical result that for every $u\in W^{1,1}(\Rd)$ the following translation estimate holds:
\begin{equation}
\label{eq:translation W1p}
	\| u(\cdot + y) - u \| _{L^p (\Rd)} \le  \| \nabla u \|_{L^p (\Rd)} |y|.
\end{equation}
This result has deep implications which are well-known. First, the compact embedding of $W^{1,p} $ in $L^p$ via the Fréchet-Kolmogorov Theorem.
A second well-known application is  the estimation of  the  remainder of the Taylor polynomial. This is a key tool used to obtain optimal orders of the convergence of convolutions as well as consistency of finite-difference discretizations. Many other applications can  also be found in the literature.

\medskip

The aim of this paper is to revisit such a theory in the context of  fractional Sobolev spaces (which are  conveniently presented e.g. in \cite{Adams+Fournier:2003sobolev+spaces,Mazya2011,Tartar2007}). It is common to define the fractional Sobolev  spaces  $W^{s,p} (\Rd)$ in the Sobolev–Slobodeckii form. Thus, for $s \in (0,1)$
and $p \in [1,\infty)$ we define the normalised Gagliardo seminorm
\begin{equation*}
[u]_{W^{s,p} (\Rd)} =  \left( s (1-s) \int_{\Rd} \int_{\Rd} \frac{|u(x) - u(y)|^p}{|x-y|^{n+sp}} \diff x \diff y \right)^{\frac 1 p}
\end{equation*}
and then
$
W^{s,p}(\Rd):= \{u\in L^p(\Rd)\, : \, [u]_{W^{s,p} (\Rd)}<+\infty \}
$, which
is a Banach space with the norm
\begin{equation*}
\| u \|_{W^{s,p} (\Rd)} = \| u \|_{L^p (\Rd)} + [u]_{W^{s,p} (\Rd)} .
\end{equation*}
The limit cases $s=0$, $s=1$ for $p\in[1,\infty]$ and $p=\infty$ for $s\in(0,1)$ are defined respectively by
\[
[u]_{W^{0,p} (\Rd)} = \|  u \|_{L^p (\Rd)}, \ [u]_{W^{1,p} (\Rd)} = \| \nabla u \|_{L^p (\Rd)}, \   \textup{and} \ [u]_{W^{s,\infty} (\Rd)} = \esssup_{x,y\in \R^n} \frac{|u(x) - u(y)|}{|x-y|^{s}}.
\]
The basic result of this paper asserts that the translation estimate holds in the  following fractional Sobolev \normalcolor setting.

\begin{theorem}
	\label{thm:main} The following estimate holds for every $u \in W^{s,p} (\Rd)$  with 	$p \in [1,+\infty]$ 	and $s \in [0,1]$:
	\begin{equation}
	\label{eq:fractional taylor less 1}
	\| u( \cdot + y )  - u  \|_{L^p (\Rd)} \le C\,   [ u ] _{W^{s,p} (\Rd)} |y|^s.
	\end{equation}
The constant $C>0$  depends only $n$, and not on $s$ or $p$. An admissible value is $C=4 n(n+1) e ^{\frac 1{e \omega_n n}}$.
\end{theorem}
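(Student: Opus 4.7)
The plan is first to dispose of the boundary cases. For $s=0$ the estimate reads $\|u(\cdot+y)-u\|_{L^p}\le 2\|u\|_{L^p}=2[u]_{W^{0,p}}$ and is trivial; for $s=1$ it is the classical estimate \eqref{eq:translation W1p}; and for $p=\infty$ it follows immediately from the pointwise definition $[u]_{W^{s,\infty}}=\esssup|u(x)-u(y)|/|x-y|^s$. So I would focus on $s\in(0,1)$ and $p\in[1,\infty)$.

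For the main case I would start from the identity
\[
u(x+y)-u(x)=\bigl[u(x+y)-u(x+\eta)\bigr]-\bigl[u(x)-u(x+\eta)\bigr],\qquad \eta\in\R^n,
\]
and average it over $\eta$ in a ball $B(y/2,r)$, which is symmetric around $y/2$. Applying Jensen's inequality to $t\mapsto|t|^p$, integrating in $x$, and using translation invariance of Lebesgue measure together with the symmetry $\eta\leftrightarrow y-\eta$ (which identifies the two halves), this would yield the one-scale bound
\[
g(y):=\|u(\cdot+y)-u\|_{L^p}^p\le \frac{2^p}{|B(y/2,r)|}\int_{B(y/2,r)} g(\eta)\,d\eta, \qquad r>0.
\]
The remaining task is then to bound this spherical average by $C(n)\,|y|^{sp}\,[u]_{W^{s,p}}^p$ uniformly in $s$ and $p$.

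The hard part of the argument, which I expect to be the main obstacle, is extracting a constant independent of $s$ and $p$. The naïve majorization $g(\eta)\le (|y|/2+r)^{n+sp}\cdot g(\eta)/|\eta|^{n+sp}$ together with the global estimate $\int g(\eta)/|\eta|^{n+sp}\,d\eta = [u]_{W^{s,p}}^p/(s(1-s))$ produces a factor $(s(1-s))^{-1/p}$ that degenerates at $s=0$ and $s=1$. To eliminate this factor I would integrate the one-scale estimate against a weight $\nu(dr)$ on $(0,\infty)$ chosen so that, after applying Fubini, the composite kernel $\Phi(|\eta-y/2|)$ in $\eta$ satisfies the pointwise bound $\Phi(|\eta-y/2|)\le K(n)\,s(1-s)\,|y|^{sp}\,|\eta|^{-n-sp}$. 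With such a choice, applying the definition of $[u]_{W^{s,p}}$ directly absorbs the factor $s(1-s)$ and gives the desired estimate with a constant depending only on $n$. I expect Beta-type integrals of the form $\int_0^\infty t^n(|y|/2+t)^{-n-sp-1}\,dt$ to supply the combinatorial factor $n(n+1)$, and a final optimization reminiscent of $\sup_{q>0}q^{1/q}=e^{1/e}$ to supply the factor $e^{1/(e\omega_n n)}$. The key step to get right is the precise construction of the weight $\nu$ so that this pointwise kernel matching holds without losing constants.
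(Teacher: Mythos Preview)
Your treatment of the boundary cases ($s\in\{0,1\}$, $p=\infty$) matches the paper. Your one-scale averaging bound
\[
g(y):=\|u(\cdot+y)-u\|_{L^p}^p\le \frac{2^p}{|B_r|}\int_{B(y/2,r)} g(\eta)\,d\eta
\]
is correct and elegant. The approach is genuinely different from the paper's, which argues entirely through $K$-interpolation: it uses the Brasco--Salort equivalence $\vertiii{u}_{s,p}\le \tfrac{2n(n+1)}{(n\omega_n)^{1/p}}[u]_{W^{s,p}}$ together with the exact identity $\|v\|_{L^p}=p^{1/p}\vertiii{v}_{(L^p,L^p)_{s,p;K}}$, and the explicit constant $4n(n+1)e^{1/(e\omega_n n)}$ arises from $\sup_{p>0}(p/(n\omega_n))^{1/p}=e^{1/(e n\omega_n)}$ applied to that product, not from any Beta integral.

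The genuine gap is the pointwise kernel matching, which cannot yield a constant uniform as $s\to 1$. After integrating against a finite positive $\nu$ you obtain $\Phi(\rho)=\int_\rho^\infty(\omega_n r^n)^{-1}\nu(dr)$, and your requirement
\[
\Phi(|\eta-y/2|)\le K\,s(1-s)\,|y|^{sp}\,|\eta|^{-n-sp}\qquad\text{for all }\eta
\]
forces, since $|\eta|\le |y|/2+|\eta-y/2|$, the bound $\Phi(\rho)\le K s(1-s)|y|^{sp}\bigl(|y|/2+\rho\bigr)^{-n-sp}$. Saturating this (the optimal choice) gives total mass
\[
M=\int_0^\infty n\omega_n\rho^{n-1}\Phi(\rho)\,d\rho
= K\,n\omega_n\,s(1-s)\,2^{sp}\,B(n,sp),
\]
so the resulting constant satisfies $C^p=2^{p(1-s)}/\bigl(n\omega_n\,s(1-s)B(n,sp)\bigr)$. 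As $s\to 0$ one has $s\,B(n,sp)\to 1/p$, so your scheme does remain bounded there (indeed improving on the $s^{-1/p}$ loss of \cite{Ambrosio2011,Brasco2014}). But as $s\to 1$, $(1-s)B(n,sp)\to 0$ and $C\sim(1-s)^{-1/p}\to\infty$. The obstruction is structural: the ball $B(y/2,r)$ always contains the point $\eta=y$, at which $g(\eta)=g(y)$, so your kernel must place nontrivial weight at scale $|y|$; for $s$ near $1$, however, the Gagliardo weight $s(1-s)|\eta|^{-n-sp}$ concentrates at small scales, and no pointwise comparison can bridge the two. The paper circumvents this precisely by interpolating against the endpoint estimate $\|u(\cdot+y)-u\|_{L^p}\le|y|\,\|\nabla u\|_{L^p}$, which supplies the missing short-scale control.
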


Let us  briefly comment on previous results related to \Cref{thm:main}.
 This kind of translation estimate for $n = 1$ can be found in Simon's \cite[Lemma 5]{Simon1986} in an abstract setting (vector valued functions)  and without explicit bounds on the constants. Like in \cite{Simon1986},  the technique of our proof  \normalcolor  is based on $K$-interpolation theory (which we introduce in detail in \Cref{sec.thm}).  On the other hand,
the result
\begin{equation*}
	\| u( \cdot + y )  - u  \|_{L^p (\Rd)} \le s^{-\frac 1 p} C(n,p)  [ u ] _{W^{s,p} (\Rd)} |y|^s
\end{equation*}
was obtained for $p = 1$ by Ambrosio--de Philippis--Martinazzi \cite[Proposition 4]{Ambrosio2011} and for $p \in (1,\infty)$ by Brasco--Lindgren--Parini \cite[Lemma A.1]{Brasco2014} via direct computations. As we recall in \Cref{rem:normalised Gagliardo-Nirenberg}, the normalised Gagliardo seminorm converges to the integer seminorm up to a constant. Hence, \cite{Ambrosio2011,Brasco2014} do not provide uniform estimates as $s \to 0$.  However,  it seems  very natural to  include  this  limit  case since for $s= 0$ we have the simple rule $\| u(\cdot + y) - u \|_{L^p (\Rd)} \le 2 \|u \|_{L^p (\Rd)}$.
We recall that these  results can also be understood as embeddings of
Besov spaces (see a detailed comment in \Cref{rem:Tartar}).

\medskip

Compared to previous related literature, a novel contribution of \Cref{thm:main} is the fact that we obtain   a clean statement  with a constant $C$ which is uniform   on the whole ranges  $s\in[0,1]$  and $p\in[1,\infty]$ (see \Cref{rem:normalised Gagliardo-Nirenberg}). Essentially, this is done by recalling a sharp
equivalence between the Gagliardo seminorm and the norm in some interpolation spaces given in the recent paper by Brasco and Salort \cite{Brasco2019}. The control of the constants in inequalities like ours can be important in the applications. %

\medskip

Once \Cref{thm:main}  is established,
we can prove the corresponding result with higher-order Sobolev regularity.
It is well known that a function $u$ in the H\"older space $C^{k,s}(\Rd)$ for some $k\geq0$ integer  and some $s\in [0,1]$ satisfies
\[
\|u(\cdot+y)- P_ku(\cdot,y) \|_{L^\infty(\Rd)} \leq C \,[ u]_{C^{k,s}(\Rd)} |y|^{k+s}.
\]
where $C=C(k)$ is a positive constant and $P_ku$ denotes the Taylor polynomial of order $k$ and centred at $x$ of the function $u$, i.e
\[
P_ku(x,y)\defeq \sum_{|\alpha|\leq k} \frac{D^\alpha u(x)}{\alpha !} y^\alpha,
\]
 where we are using the standard  multi-index notation (see \eqref{eq:multiin} below). The higher-order fractional Sobolev norms for $k\in \mathbb N$ and $s \in
 (0,1]$ are defined  as %
\begin{align*}
\| u \|_{W^{k+s,p} (\Rd) } & \defeq \| u \|_{W^{ k , p} (\Rd)} +  [u]_{W^{k+s,p} (\Rd)}, \quad \textup{where} \quad
[u]_{W^{k+s,p} (\Rd)}  \defeq \max_{|\alpha |= k }[ D^\alpha u  ]_{W^{s ,p} (\Rd)}
\end{align*}
and $W^{k+s,p}(\Rd):= \{u\in W^{k,p}(\Rd)\, : \, [u]_{W^{k+s,p} (\Rd)}<+\infty \} $.
For $p = \infty$ they coincide with the Hölder spaces: for $k\geq0$ integer and $s \in (0,1)$ we have $W^{k+s,\infty}(\R^n)  = C^{k, s}(\R^n) $.

\medskip

In  this higher-order fractional  Sobolev setting, we prove  the following estimate of the remainder term in the Taylor expansion.

\begin{theorem}
	\label{thm:fractional taylor}
	 Let $u \in W^{ k + s , p} (\Rd) $ where 	 $p \in [1,\infty]$,
	 $s \in [0,1]$, and  $k\geq0$ is an integer. The following estimate holds
		\begin{equation*}
		\| u( \cdot + y )  - P_{k} (\cdot, y)  \|_{L^p (\Rd)} \le   		 C\, [  u ] _{W^{k+s,p} (\Rd)}
		  |y|^{k+s},
		\end{equation*}
		where $C>0$ depends only on $n$ and $k$.
\end{theorem}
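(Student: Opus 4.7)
The plan is to reduce Theorem~\ref{thm:fractional taylor} to Theorem~\ref{thm:main} applied to the top-order derivatives, using the integral form of the Taylor remainder. The case $k=0$ is exactly Theorem~\ref{thm:main}, so I may assume $k\geq 1$. A preliminary density step reduces matters to the case where $u$ is smooth: for $p<\infty$, mollify $u_\veps = \rho_\veps * u$; Young's inequality applied to $[D^\alpha u_\veps]_{W^{s,p}} = [\rho_\veps * D^\alpha u]_{W^{s,p}}$ ensures $[u_\veps]_{W^{k+s,p}}\le [u]_{W^{k+s,p}}$ and $u_\veps\to u$ in $W^{k+s,p}$, so the estimate passes to the limit. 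For $p=\infty$ and $s\in(0,1)$ we have $W^{k+s,\infty}=C^{k,s}$ so $u$ is already classically $C^k$; the extreme cases $s\in\{0,1\}$ for $p=\infty$ are handled by a pointwise mollification argument.

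Assuming $u$ smooth, the standard multivariate Taylor formula with integral remainder gives
\begin{equation*}
u(x+y) - P_k(x,y) \,=\, k \sum_{|\alpha|=k} \frac{y^\alpha}{\alpha!} \int_0^1 (1-t)^{k-1} \bigl[\, D^\alpha u(x+ty) - D^\alpha u(x) \,\bigr] \diff t.
\end{equation*}
Taking the $L^p$ norm in $x$, using Minkowski's integral inequality and $|y^\alpha|\le |y|^k$,
\begin{equation*}
\| u(\cdot + y) - P_k(\cdot,y) \|_{L^p(\Rd)} \,\le\, k \sum_{|\alpha|=k} \frac{|y|^k}{\alpha!} \int_0^1 (1-t)^{k-1} \| D^\alpha u(\cdot + ty) - D^\alpha u \|_{L^p(\Rd)} \diff t.
\end{equation*}
Now Theorem~\ref{thm:main} applied to $D^\alpha u \in W^{s,p}(\Rd)$ with translation $ty$ yields $\| D^\alpha u(\cdot + ty) - D^\alpha u \|_{L^p} \le C_1 [D^\alpha u]_{W^{s,p}} (t|y|)^s \le C_1 [u]_{W^{k+s,p}} t^s |y|^s$, with $C_1$ the constant of Theorem~\ref{thm:main} (independent of $s$ and $p$).

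Substituting, the $t$-integral becomes a Beta function $\int_0^1 (1-t)^{k-1} t^s \diff t = B(s+1,k) = \Gamma(s+1)\Gamma(k)/\Gamma(s+k+1)$, and $k\,B(s+1,k)$ is uniformly bounded for $s\in[0,1]$. Finally the multinomial identity $\sum_{|\alpha|=k} k!/\alpha! = n^k$ gives $\sum_{|\alpha|=k} 1/\alpha! = n^k/k!$, so
\begin{equation*}
\| u(\cdot + y) - P_k(\cdot,y) \|_{L^p(\Rd)} \,\le\, C_1 \cdot k\, B(s+1,k) \cdot \frac{n^k}{k!} \cdot [u]_{W^{k+s,p}(\Rd)} |y|^{k+s},
\end{equation*}
producing a constant that depends only on $n$ and $k$ as required. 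The main obstacle I foresee is not the algebra but the density step at $p=\infty$ with $s\in\{0,1\}$, where one must justify both the existence of the classical Taylor formula and the stability of the seminorm under mollification; once smoothness is available, the proof is a purely mechanical combination of Minkowski, Theorem~\ref{thm:main}, and the Beta/multinomial identities above.
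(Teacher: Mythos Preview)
Your proof is correct and follows essentially the same route as the paper: write the integral Taylor remainder, pass the $L^p$ norm inside via Minkowski/Jensen, and apply Theorem~\ref{thm:main} to $D^\alpha u$ for $|\alpha|=k$. The only difference is that the paper writes out $k=1$ and leaves $k>1$ to induction, whereas you handle all $k$ at once with the full multivariate remainder and track the constant explicitly via the Beta function and the multinomial identity; your caveats on the density step at $p=\infty$ are well-taken but easily resolved, and the paper does not discuss them either.
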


\medskip

\noindent {\bf Notation.}
Throughout the paper, $\omega_n$ denotes the volume of the unit ball of $\Rd$ and
we will use the multi-index notation $\alpha=(\alpha_1,\ldots,\alpha_n) \in \mathbb N^n$, $x=(y_1,\ldots,y_n)\in \Rd$,
\begin{equation}\label{eq:multiin} \quad |\alpha|=\sum_{i=1}^n \alpha_i, \quad \alpha!= \prod_{i=1}^n \alpha_i !, \quad y^\alpha=\prod_{i=1}^n y_i^{\alpha_i}, \quad \textup{and} \quad D^\alpha u=\frac{\partial^{|\alpha|}u}{\partial x_1^{\alpha_1} \cdots x_n^{\alpha_n}}.
\end{equation}

\noindent {\bf Structure of the paper.}   In the rest of \Cref{sec:intro} we make some \normalcolor comments on relevant results that in a way or another intersect and complement ours.  In \Cref{sec.app} we present several applications of \Cref{thm:fractional taylor}. We delay the proof of the main theorems to
\Cref{sec.thm}. We conclude the paper with    a list of \normalcolor comments and extensions in \Cref{sec:comments}.

\subsection{Comments on the limits as $s\to0^+$ and $s\to1^-$}
	\label{rem:normalised Gagliardo-Nirenberg}
Let us note that
for
$p \in [1,+\infty)$ and $u \in C_c^\infty (\Rd)$
the normalised Gagliardo  seminorm converges to classical seminorms up to constants
\begin{align}
\label{eq:limit seminorm to 1}
[ u ]_{W^{s,p} (\Rd)} &\to K_1 \| \nabla u \|_{L^p (\Rd)} && \text{as } s \to 1^- \\
\label{eq:limit seminorm to 0}
[ u ]_{W^{s,p} (\Rd)} &\to K_0 \| u \|_{L^p (\Rd)} && \text{as } s \to 0^+,
\end{align}
where the different constants $K_i=K_i (n,p) $  are positive (see Bourgain-Brezis-Mironescu in \cite[Corollary 2]{Bourgain+Brezis2001} and Maz'ya-Shaposhnikova in \cite[Theorem 3]{Mazya2002}).
Hence, as $s$ approaches $1$, we recover \eqref{eq:translation W1p} up to a constant.
Dávila \cite{Davila2002} showed a similar result for $u \in BV (\Omega)$.
This kind of interpolation norm convergence at the endpoints of the interpolation is discussed in a general setting in \cite{Milman2005}.
\subsection{Related known results in Besov spaces}
	\label{rem:Tartar}
	The Besov spaces $B^s_{p,\infty} (\Rd)$ can be defined as the functions in $L^p(\Rd)$ such that $\| u(\cdot + y) - u \|_{L^p} \le C |y|^s$ for some constant $C$. Hence, \Cref{thm:main} implies, in particular, that
	$W^{s,p}(\Rd) \subset B^{s}_{p,\infty}(\Rd)$. 	
	
	There are two equivalent ways of defining the Besov spaces $B^s_{p,q}(\R^n)$. The first one is in terms of the integrability of the $p$-modulus of continuity as  presented in \cite{Bennet1988}. More precisely, for $s \in (0,1]$, we take  $\omega_{p} (u,t) = \sup_{|y| \le t}  \| u(\cdot + y) - u \|_{L^p(\R^n)}$ and define the Besov space $B^s_{p,q}(\R^n)$ as the functions in $L^p(\R^n)$ with finite Besov norm
	\begin{equation*}
	\| u \|_{B^s_{p,q} (\Rd)} = \begin{dcases}
	 \|u \|_{L^p (\Rd)}+ \left( \int_0^\infty (t^{-s} \omega_{p} (u,t) )^q \frac{dt}t \right)^{\frac 1 q} &  \quad \textup{for} \quad  q <\infty , \\
	 \|u \|_{L^p (\Rd)}+ \sup_{t>0} \left( t^{-s} \omega_{p} (u,t)\right)  & \quad \textup{for} \quad q = \infty.
	\end{dcases}
	\end{equation*}
	This norm is fairly similar to
	the Gagliardo norm in fractional Sobolev spaces,
	and in fact it is known that
	 $W^{s,p}(\R^n) = B^s_{p,p}(\R^n)$. Note also that a version of   \eqref{eq:fractional taylor less 1} \normalcolor can be written in terms of $p$-modulus as
	\begin{equation*}
		\sup_{t > 0} (t^{-s} \omega_{p} (u,t)) \le C [u]_{W^{s,p}(\R^n)}.
	\end{equation*}
	This definition of Besov spaces when $s >1$ requires the higher order $p$-modulus of continuity, for which we refer the reader to \cite[Section 4 in Chapter 5]{Bennet1988}.
	
	The second definition (see \cite{Adams+Fournier:2003sobolev+spaces,Tartar2007}),  valid for $q<+\infty$, \normalcolor is given in terms of $K$-interpolation (that will be properly introduced in \Cref{sec.thm}) as
	\begin{equation*}
		B^{s}_{p,q} (\Rd) = (L^p (\Rd), W^{k,p} (\Rd))_{\frac s k, q; K}
	\end{equation*}
	where $k$ is a positive  integer greater than $s$.
	The equivalence of both definitions  is proven in any of the references (see, e.g.,
	\cite[Theorem 7.47]{Adams+Fournier:2003sobolev+spaces}
	or \cite[Chapter 35]{Tartar2007}).

	In fact, for the interpolation is known (see \cite[Corollary 7.17]{Adams+Fournier:2003sobolev+spaces}) that $(X_0, X_1)_{\theta,p;K} \subset (X_0,X_1)_{\theta, q; K}$ if $1 \le p \le q \le \infty$ and $\theta \in (0,1)$. Hence,  $W^{s,p} (\Rd) \subset B^{s}_{p,\infty} (\Rd) $ can
	also
	be deduced in this way.
	Embeddings between Besov spaces are discussed at length in \cite{Bennet1988}.
	Actually, through the mentioned embeddings, we can retrieve  \eqref{eq:fractional taylor less 1} but this time with an unknown constant $C(n,p,s)$ and depending on the $W^{s,p}$ norm (not the Gagliardo seminorm). To the best of our knowledge, it seems that \eqref{eq:fractional taylor less 1} is not widely-known.
	In practice,
	many authors use weaker, more difficult and less powerful properties of the fractional Sobolev spaces. We provide a clear and direct statement and proof of \eqref{eq:fractional taylor less 1} with  explicit constants.

\section{Applications}\label{sec.app}

We devote this section  to present several direct applications of the above results which motivated our investigation.

\subsection{Rellich–Kondrachov. Compact embeddings of  $W^{s,p}(\R^n)$}
\label{sec:Rellic-Kondrachov}
First, we prove the compact embedding of $W^{s,p}(\R^n)$ more directly
and with further generality
than in previous literature (cf. \cite[Section 7]{DiNezza2012}).
\normalcolor
\begin{theorem}[Fractional Rellich–Kondrachov]
	\label{thm:Rellich-Kondrachov}
	Let $p \in [1,\infty)$ and $\Omega \subset \Rd$ be measurable with finite measure and $K \subset \Rd$ compact.
	For $s \in (0,1)$
	We have the following:
	\begin{enumerate}
	
		\item\label{it:Rellich 2}  If $sp < n$ then $W^{s,p} (\Rd)$ is compactly  embedded
		in $L^q (\Omega)$ for $q < p^\star (n,s) \defeq  \tfrac{np}{n-sp}$.
		
		\item\label{it:Rellich 3}  If $sp =  n$ then $W^{s,p} (\Rd)$ is compactly  embedded
		in $L^q (\Omega)$ for $q < \infty$.
		
		\item\label{it:Rellich 4} If $sp >  n$ then $W^{s,p} (\Rd)$ is compactly  embedded
		in $C^\beta (K)$ for any  with $\beta < s- \tfrac n p$.
		
\setcounter{countersaver}{\value{enumi}}
	\end{enumerate}
	Furthermore, if \normalcolor $\Omega$ is a bounded with Lipschitz boundary, then
	\begin{enumerate}
\setcounter{enumi}{\value{countersaver}}
		\item\label{it:Rellich 5} If $s > t $ then $W^{s,p} (\Rd)$  is compactly  embedded in $W^ {t,p}  (\Omega)$.
	\end{enumerate}
	
\end{theorem}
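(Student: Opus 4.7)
My plan is to base the entire proof on the Fréchet--Kolmogorov compactness theorem, using Theorem \ref{thm:main} as the equicontinuity ingredient and then boosting to the other target spaces via interpolation. Given a bounded sequence $\{u_k\} \subset W^{s,p}(\R^n)$, Theorem \ref{thm:main} delivers
\[
\sup_k \|u_k(\cdot+y) - u_k\|_{L^p(\R^n)} \le C|y|^s \sup_k [u_k]_{W^{s,p}(\R^n)} \xrightarrow[|y|\to 0]{} 0,
\]
i.e.\ uniform $L^p$-equicontinuity. Combined with the uniform $L^p$ bound and the tail-smallness built into $\Omega$ having finite measure (after a Hölder truncation leveraging a higher $L^q$-bound when $\Omega$ is unbounded), Fréchet--Kolmogorov yields precompactness of $\{u_k\}$ in $L^p(\Omega)$. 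This is the common core for \ref{it:Rellich 2}, \ref{it:Rellich 3}, and \ref{it:Rellich 5}.

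To reach \ref{it:Rellich 2}, I would combine this $L^p$-precompactness with the continuous (non-compact) fractional embedding $W^{s,p}(\R^n) \hookrightarrow L^{p^\star}(\R^n)$: any $q \in [p, p^\star)$ is attained by Hölder interpolation
\[
\|u_j - u_k\|_{L^q(\Omega)} \le \|u_j-u_k\|_{L^p(\Omega)}^{1-\theta}\|u_j-u_k\|_{L^{p^\star}(\Omega)}^\theta, \qquad \tfrac{1}{q} = \tfrac{1-\theta}{p}+\tfrac{\theta}{p^\star},
\]
while $q < p$ is immediate from the finite measure of $\Omega$. Part \ref{it:Rellich 3} is identical, using the critical embedding $W^{s,p} \hookrightarrow L^r$ valid for all finite $r$ when $sp = n$.

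Part \ref{it:Rellich 4} bypasses Fréchet--Kolmogorov: invoking the fractional Morrey embedding $W^{s,p}(\R^n) \hookrightarrow C^{s-n/p}_b(\R^n)$ (itself derivable from Theorem \ref{thm:main} by a Campanato-type argument), the family $\{u_k\}$ is uniformly bounded in $C^{s-n/p}(K)$, so Arzelà--Ascoli produces a subsequence converging uniformly on $K$; the classical Hölder interpolation
\[
\|v\|_{C^\beta(K)} \le C \|v\|_{C(K)}^{1-\lambda}\|v\|_{C^{s-n/p}(K)}^{\lambda}, \qquad \lambda = \tfrac{\beta}{s-n/p},
\]
then upgrades to $C^\beta(K)$-convergence for any $\beta < s - n/p$.

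For \ref{it:Rellich 5}, on the bounded Lipschitz $\Omega$, I would apply the Fubini identity
\[
[v]_{W^{t,p}(\R^n)}^p = p\,t(1-t)\int_{\R^n}\frac{\|v(\cdot+h)-v\|_{L^p(\R^n)}^p}{|h|^{n+tp}}\,dh
\]
to $v = u_j - u_k$ and split the $h$-integral at $|h| = r$. On $|h| \le r$, Theorem \ref{thm:main} gives an integrable integrand (precisely because $s > t$), contributing $C[u_j-u_k]_{W^{s,p}(\R^n)}^p\, r^{(s-t)p}$; on $|h| > r$, the trivial bound $\|v(\cdot+h)-v\|_{L^p}\le 2\|v\|_{L^p(\R^n)}$ contributes $C\|u_j-u_k\|_{L^p(\R^n)}^p\, r^{-tp}$. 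Using $[\cdot]_{W^{t,p}(\Omega)} \le [\cdot]_{W^{t,p}(\R^n)}$, a diagonal choice of $r$ (first small enough to kill the first term uniformly in $j,k$, then $j,k$ large to kill the $L^p$-difference using part \ref{it:Rellich 2}) produces a $W^{t,p}(\Omega)$-Cauchy subsequence. The main subtlety I anticipate is the diagonal argument here and the tail-truncation in the $L^p$-compactness step when $\Omega$ is unbounded but of finite measure.
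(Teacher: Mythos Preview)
Your treatment of items \ref{it:Rellich 2}--\ref{it:Rellich 4} matches the paper's proof almost verbatim: Fr\'echet--Kolmogorov via \Cref{thm:main} for the $L^p(\Omega)$ core, then H\"older interpolation against the continuous Sobolev/Morrey embeddings, and Arzel\`a--Ascoli for the supercritical case. One simplification: the form of Fr\'echet--Kolmogorov quoted in the paper (\Cref{thm:Frechet-Kolmogorov Brezis}) already yields precompactness in $L^p(\Omega)$ for any finite-measure $\Omega$ from boundedness and translation-equicontinuity in $L^p(\R^n)$ alone, so your planned ``tail-truncation via a higher $L^q$-bound'' is unnecessary.

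For item \ref{it:Rellich 5} you take a different route from the paper, and as written there is a gap. The paper simply invokes the Gagliardo--Nirenberg interpolation inequality \eqref{eq:Gagliardo-Nirenberg interpolation} on the Lipschitz domain $\Omega$,
\[
\|u_m-u_k\|_{W^{t,p}(\Omega)} \le C\,\|u_m-u_k\|_{W^{s,p}(\Omega)}^{\theta}\,\|u_m-u_k\|_{L^p(\Omega)}^{1-\theta},
\]
and concludes immediately from the $L^p(\Omega)$-Cauchy property. Your splitting argument is more self-contained (it avoids citing \cite{Brezis2018}), but after the split you are left with the term $C\|u_j-u_k\|_{L^p(\R^n)}^p\, r^{-tp}$ and then appeal to the $L^p(\Omega)$-compactness obtained earlier to make it small. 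That does not work: the $L^p(\Omega)$-compactness says nothing about $\|u_j-u_k\|_{L^p(\R^n)}$, and indeed the paper's remark after the proof gives the standard translated-bump counterexample showing the latter need not tend to zero along any subsequence.

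The fix is to carry out the splitting on the $\Omega$-seminorm rather than on $\R^n$. Since $\Omega$ is bounded, only increments $|h|\le \mathrm{diam}(\Omega)$ contribute to $[v]_{W^{t,p}(\Omega)}$; the large-$h$ piece is then controlled by $C\|v\|_{L^p(\Omega)}^p\, r^{-tp}$, while for the small-$h$ piece you may still dominate $\int_{\Omega\cap(\Omega-h)}|v(x+h)-v(x)|^p\,dx$ by the full-space translation estimate from \Cref{thm:main}. With this adjustment your diagonal argument closes, and in effect reproves by hand the interpolation inequality the paper quotes.
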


More general results in which $W^{t,p}$ is replaced by $W^{t,q}$ for different values of $q$ can be obtained in a similar fashion.
We recall the fractional Gagliardo-Nirenberg interpolation inequality that can be found in \cite{Brezis2018}: if $\Omega$ is a standard domain (i.e. $\Rd$, a half-space or it is bounded with Lipschitz boundary) then
\begin{equation}
\label{eq:Gagliardo-Nirenberg interpolation}
\|  f \|_{W^{s,p} (\Omega)} \le C \| f \|_{W^{s_1,p_1} (\Omega)}^\theta \| f \|_{W^{s_2,p_2} (\Omega)}^{1-\theta}, \quad \theta \in (0,1), \quad s = \theta s_1 + (1-\theta) s_2 ,\quad \tfrac 1 p = \tfrac \theta {p_1} + \tfrac{1-\theta}{p_2}.
\end{equation}
as long at it fails that: $s_2$ is an integer $\ge 1$ and $p_2 = 1$ and $s_2 - s_1 \le 1 - \frac 1 {p_1}$.

The fractional version of the Gagliardo-Nirenberg-Sobolev inequality for $s \in (0,1)$ can be found in \cite{DiNezza2012}, where the compact embedding in \Cref{it:Rellich 2} in \Cref{thm:Rellich-Kondrachov}   and the continuous are proved for the rest of the cases. In \cite[Theorem 6.5]{DiNezza2012} for $sp <n$
\begin{equation}
\label{eq:GNS sp < n}
\| u \|_{L^{p^\star}(\Rd)} \le C[u]_{W^{s,p} (\Rd)}, \qquad \text{where } p^\star = p ^\star (n,s) ,
\end{equation}
for $sp = n$ in \cite[Theorem 6.9]{DiNezza2012}
\begin{equation*}
\| u \|_{L^{q}(\Rd)} \le C
\| u \|_{W^{s,p} (\Rd)}, \qquad \text{for all } q \in [p,+\infty),
\end{equation*}
and if $sp > n$ we have in \cite[Theorem 8.2]{DiNezza2012} that
\begin{equation}
\label{eq:GNS sp > n}
	\| u \|_{C^{0,\alpha} (\Rd)}  \le C \| u \|_{W^{s,p}(\Rd)},  \qquad \text{for  all } \alpha \le  s - \tfrac n p.
\end{equation}
For $s \in (0,1]$ and $sp > n$ it also worth mentioning that the inequality above holds in terms of the semi-norms in what it typically known as Morrey's inequality
\begin{equation*}
	|u(x) - u(y)| \le C [u]_{W^{s,p}(\Rd) } |x-y|^{s-\frac n p}.
\end{equation*}
The proof for $s = 1$ is classical and for $s \in (0,1)$, it may be found in \cite[Proposition 14.40 and Corollary 14.28]{Leoni2009Sobolev}
using Besov spaces
(see also \cite{Brasco2014} for a more direct proof).
For the sake of completeness, for $s \in (0,1)$ and $p \in [1,\infty)$ we recall the fractional Poincaré inequality (see \cite[Proposition 2.1]{Drelichman2018})
\begin{equation*}
\left \| u - \frac{1}{|\Omega|} \int_\Omega u (x) \diff x \right \|_{L^p (\Omega)} \le \frac{\textrm{diam} (\Omega)^{s + \frac n p} }{|\Omega|^{\frac 1 p}} \frac{ [u]_{W^{s,p} (\Rd)} }{ s^{\frac 1 p } (1-s)^{\frac 1 p} } .
\end{equation*}
We recall a particular form of the Fréchet-Kolmogorov Theorem
(see, e.g., Theorem 4.26 in \cite{Brezis:2010})
\begin{theorem}
	\label{thm:Frechet-Kolmogorov Brezis}
	Let $\mathcal F$ be a bounded set in $L^p (\Rd)$
	for $p \in [1,\infty)$
	and assume
	\begin{equation}
	\label{eq:FK uniform translations}
	\text{for every $\ee >0$, there exists $\delta > 0$ such that }   \| f (\cdot + y) - f \|_{L^p (\mathbb R^n)} \le \varepsilon ,  \forall |y| < \delta \text{ and } f \in \mathcal F.
	\end{equation}
	Then, for every $\Omega \subset \Rd$ measurable of finite measure the closure of $\mathcal F|_\Omega$ is compact in $L^p (\Omega)$.
\end{theorem}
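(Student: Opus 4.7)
The plan is to show that $\mathcal F|_\Omega$ is totally bounded in $L^p(\Omega)$, which is equivalent to relative compactness in the complete space $L^p(\Omega)$. The key intermediate object is the mollified family $\{\rho * f : f \in \mathcal F\}$ for a suitable smooth mollifier $\rho$: hypothesis \eqref{eq:FK uniform translations} will ensure that $\rho * f$ approximates $f$ uniformly across $\mathcal F$, while the classical Arzelà--Ascoli theorem will provide a finite $\ee$-net for the mollified family on bounded regions.

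Given $\ee > 0$, I would pick $\delta > 0$ as in \eqref{eq:FK uniform translations} and a nonnegative $\rho \in C_c^\infty(\R^n)$ with $\int \rho = 1$ and $\supp \rho \subset B(0,\delta)$. Minkowski's inequality for integrals then gives
\[
\|\rho * f - f\|_{L^p(\R^n)} \le \int_{B(0,\delta)} \rho(y) \, \|f(\cdot - y) - f\|_{L^p(\R^n)} \diff y \le \ee,
\]
uniformly in $f \in \mathcal F$; this is the only step where the translation hypothesis is used. Setting $M \defeq \sup_{f \in \mathcal F} \|f\|_{L^p(\R^n)} < \infty$, Hölder's inequality yields the uniform bound $\|\rho * f\|_{L^\infty(\R^n)} \le \|\rho\|_{L^{p'}(\R^n)} M$, and since $\rho$ is smooth with compact support one has $\|\rho(\cdot - x) - \rho(\cdot - x')\|_{L^{p'}(\R^n)} \le C_\rho |x - x'|$, so
\[
|(\rho * f)(x) - (\rho * f)(x')| \le C_\rho M |x - x'|,
\]
that is, the family $\{\rho * f\}_{f \in \mathcal F}$ is uniformly bounded and equi-Lipschitz on $\R^n$.

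The main technical obstacle is that $\Omega$ may have finite measure yet be unbounded, which prevents a direct application of Arzelà--Ascoli on $\Omega$. I would handle this by choosing $R$ large enough that $|\Omega \setminus B(0,R)| < \eta$ for a prescribed small $\eta$; then
\[
\|\rho * f\|_{L^p(\Omega \setminus B(0,R))} \le \|\rho\|_{L^{p'}(\R^n)} M \, \eta^{1/p}
\]
uniformly in $f \in \mathcal F$, so the tail contribution is negligible. On the compact set $\overline{B(0,R)}$, Arzelà--Ascoli extracts a finite subfamily $\{\rho * f_1, \dots, \rho * f_N\}$ approximating $\{\rho * f\}_{f \in \mathcal F}$ in $L^\infty(\overline{B(0,R)})$, hence in $L^p(\Omega \cap B(0,R))$, within $\ee$. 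Combining this with the uniform mollification estimate produces a finite $O(\ee)$-net for $\mathcal F|_\Omega$ in $L^p(\Omega)$, establishing total boundedness and concluding the proof.
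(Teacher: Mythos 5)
Your proof is correct. Note that the paper does not prove this statement at all---it is quoted from Brezis (Theorem 4.26)---and your argument (uniform mollification error from \eqref{eq:FK uniform translations} via Minkowski's integral inequality, Arzel\`a--Ascoli applied to the uniformly bounded, equi-Lipschitz family $\{\rho * f\}$ on a large ball, and a tail cut-off using the finite measure of $\Omega$ together with the uniform $L^\infty$ bound on $\rho * f$) is essentially the standard proof given in that reference, with all the delicate points (in particular, that only the mollified functions, not $f$ themselves, need tail control) handled correctly.
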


With this result, we can proceed to the proof of \Cref{thm:Rellich-Kondrachov}\normalcolor.
Once \eqref{eq:fractional taylor less 1} is established, we rely on standard arguments. For similar proofs of \Cref{it:Rellich 2} see \cite[Theorem 6.5]{DiNezza2012} or \cite[Theorem 2.7]{Brasco2014} which follow the usual argument, see e.g. \cite[Theorem 9.16]{Brezis:2010}).
\begin{proof}[Proof of \Cref{thm:Rellich-Kondrachov}]
First, let us prove the the result for $q = p$.
\normalcolor
We take as $\mathcal F$  any bounded set of $W^{s,p} (\Rd)$.
Naturally, it is bounded in $L^p (\Rd)$. By \Cref{thm:main} we have that
	\begin{equation*}
		\| f(\cdot + y ) - f \|_{L^p (\Rd)} \le C \| f \|_{W^{s,p} (\Rd)} |y|^s \le  \tilde{C} |y|^s \quad  \textup{with} \quad \tilde{C}= C\sup_{f\in \mathcal{F}} \{\|f\|_{W^{s,p}(\Rd)}\}
	\end{equation*}
	for any $f \in \mathcal F$. Given $\ee > 0$, we can choose  $\delta = (\ee / \tilde{C})^{\frac 1 s}$ so that \eqref{eq:FK uniform translations} holds. Applying  \Cref{thm:Frechet-Kolmogorov Brezis} we conclude that $W^{s,p} (\Rd)$ is compactly embedded in $L^p (\Omega)$.

For the rest of the cases we prove sequential compactness.  Let $u_m \in W^{s,p} (\Rd)$ be a bounded sequence.

\ref{it:Rellich 2}) For $q \le p$ the result is trivial by the first part. Let $q \in [p,p^\star)$.
We prove sequential compactness. Let $u_m$ be a bounded sequence in $W^{s,p} (\Rd)$. By the previous part is has a subsequence, still denoted $u_{m}$, that converges strongly in $L^p (\Omega)$. By Hölder's inequality
\begin{equation*}
	\| u_{m} - u_k \|_{L^q (\Omega)} \le  \| u_m - u_k \|_{L^p (\Omega)}^{1-\theta} \|u_m - u_k \|_{L^{p^\star}  (\Omega)}^\theta \le 2 \| u_m - u_k \|_{L^p (\Omega)}^{1-\theta} \sup_\ell  \|u_\ell  \|_{L^{p^\star}  (\Omega)}^\theta
\end{equation*}
Hence, due to the continuous embedding \eqref{eq:GNS sp < n} and the fact that $u_m$ is a Cauchy sequence in $L^p(\Omega)$, we have that $u_m$ is a Cauchy sequence in $L^{q} (\Omega)$. Hence, it converges strongly in $L^q (\Omega)$.

\ref{it:Rellich 3}) The same argument applies.

\ref{it:Rellich 4}) We apply \eqref{eq:GNS sp > n} and the Ascoli-Arzelá theorem.

\ref{it:Rellich 5})  In this case, there exists a subsequence, still denote $u_m$ such that $u_m \to u$ in $L^p (\Rd)$.
We can apply  \eqref{eq:Gagliardo-Nirenberg interpolation} with $s_1 = s, s_2 = 0, \theta = t/s, p_0 = p_1 = p$ and write
\begin{align*}
\| u_m - u_n \|_{W^{t,p} (\Omega) } & \le C \| u_m - u_n \|_{W^{s,p} (\Rd) }^\theta  \| u_m - u_n \|_{L^p (\Omega) }^{1-\theta} \le C  \| u_m - u_n \|_{L^p (\Omega) }^{1-\theta}.
\end{align*}
Hence $u_m$ is Cauchy in $W^{t,p} (\Omega)$.
\end{proof}
\begin{remark}
	A natural question is whether these embeddings are  also compact into $L^p(\Rd)$. As in the integer case, this is not true.
	A very simple counterexample is the following. Let $u \in C_c^\infty (\Rd) \setminus \{0\}$ and consider the sequence $u_k (x) = u (x - k e_1)$ where $k \in \mathbb N$. Clearly $\| u_k \|_{W^{s,p} (\Rd)} = \| u \|_{W^{s,p} (\Rd)} $ so it is bounded. The weak limit of $u_k$ in $L^p (\Rd)$ is clearly $0$. Hence, the sequence cannot have an $L^p$-strongly convergence subsequence.
	
	The Fréchet-Kolmogorov Theorem states that a family $\mathcal F$ is relatively compact in $L^p (\Rd)$ if and only if it is bounded in $L^p(\R^n)$ and the following hold:
	\begin{enumerate}
		\item The tails are uniformly controlled, i.e. for every $\ee >0$ there exists $R$ such that
		\begin{equation*}
		\label{eq:FK uniform tails}
		\int_{|x| > R} |f|^p \le \ee , \qquad \forall f \in \mathcal F
		\end{equation*}
		\item The translations are uniformly controlled, i.e. \eqref{eq:FK uniform translations}.
	\end{enumerate}
	The boundedness of the tails cannot be uniformly controlled by the $W^{s,p}$ norm (as in the counterexample), and hence the weaker form \Cref{thm:Frechet-Kolmogorov Brezis} is more convenient.
\end{remark}

The reader may find in
\cite{Dao2018} and
 \cite{Dao+Diaz2019} an interesting proof of the continuous embeddings between fractional Sobolev spaces.

\medskip

\subsection{Rates of convergence of convolutions for $W^{s,p} (\Rd)$ functions}

A well-know result says that if $u \in L^p (\Rd)$ and the $\rho_\ee$ form a suitable family of mollifiers, then $\rho_\ee * u \to u$ in $L^p (\Rd)$. This fact is used in countless theoretical results (for example the proof of \Cref{thm:Frechet-Kolmogorov Brezis}). However, for certain applications it is relevant to give precise rates of convergence of these convolutions. Results for $W^{1,p} (\Rd)$ and $W^{2,p} (\Rd)$ are well-known.
The argument is also well known in some fractional Besov spaces.
\normalcolor

As a second application of our main result, we recover  fractional \normalcolor rates of convergence of convolutions,  up to a quadratic order. \normalcolor
Notice that the result is better for even mollifiers.
\begin{theorem}
	\label{thm:rate of convergence of convolutions}
Let  $\rho: \Rd \to \mathbb R$ is such that $\int_{\Rd} \rho (x) \diff x = 1$ \ and \ $\supp \rho = B_1$, and let $\rho_\ee (x) = \ee^{-n} \rho(x / \varepsilon)$. 	There exists $C = C(n)$ such that,
for every $u \in W^{s,p} (\Rd)$ with $p \in [\normalcolor1,\infty]\normalcolor$ and $s \in [0,1]\normalcolor$,  we have
		\begin{equation*}
		\| \rho_\ee * u - u \|_{L^p (\Rd)} \le \, C
			[ u ]_{W^{s,p} (\Rd)}
		\normalcolor
		\ee^{s}.
		\end{equation*}
	Furthermore,  if $\rho$ is even (i.e. $\rho (y) = \rho(-y)$) the result holds for $s \in [0,2]\normalcolor$.
\end{theorem}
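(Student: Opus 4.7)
The plan is to reduce both assertions directly to Theorems~\ref{thm:main} and \ref{thm:fractional taylor} via Minkowski's integral inequality, exploiting the cancellation $\int \rho_\varepsilon = 1$, and in the even case also $\int \rho_\varepsilon(y)\, y\,\dd y = 0$.

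For the first claim I start from the identity
\begin{equation*}
\rho_\varepsilon * u(x) - u(x) = \int_{\Rd} \rho_\varepsilon(y)\bigl(u(x-y) - u(x)\bigr) \dd y,
\end{equation*}
which is valid since $\int \rho_\varepsilon = 1$. Minkowski's inequality for integrals gives
\begin{equation*}
\| \rho_\varepsilon * u - u \|_{L^p(\Rd)} \le \int_{\Rd} |\rho_\varepsilon(y)|\, \| u(\cdot - y) - u \|_{L^p(\Rd)}\dd y.
\end{equation*}
Then I apply \Cref{thm:main} to bound the inner norm by $C\,[u]_{W^{s,p}(\Rd)} |y|^s$, and use that $\supp \rho_\varepsilon \subset B_\varepsilon$. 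A change of variables $y = \varepsilon z$ factors out $\varepsilon^s$, leaving a finite integral $\int_{B_1} |\rho(z)|\,|z|^s \dd z \le \|\rho\|_{L^1}$. This handles the range $s \in [0,1]$ with an absolute constant.

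For the even case and $s \in [1,2]$ I write $s = 1+s'$ with $s' \in [0,1]$. The key cancellation is that the evenness of $\rho$ gives $\int \rho_\varepsilon(y)\,y\,\dd y = 0$, so I may freely subtract the linear Taylor term inside the integral:
\begin{equation*}
\rho_\varepsilon * u(x) - u(x) = \int_{\Rd} \rho_\varepsilon(y)\bigl(u(x-y) - P_1 u(x,-y)\bigr) \dd y.
\end{equation*}
Now Minkowski's integral inequality combined with \Cref{thm:fractional taylor} (applied with $k=1$ and translation $-y$) bounds the inner $L^p$ norm by $C\,[u]_{W^{1+s',p}(\Rd)}\,|y|^{1+s'}$. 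The same rescaling $y = \varepsilon z$ then produces the factor $\varepsilon^{1+s'} = \varepsilon^s$, with the remaining integral $\int_{B_1}|\rho(z)|\,|z|^{1+s'}\dd z$ finite. The endpoint $s \in [0,1]$ of the even case is already covered by the first part.

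There is no real obstacle: the only subtlety is the symmetry observation that allows a jump from order $s\le 1$ to $s\le 2$ under the evenness hypothesis, together with the verification that the $\int |\rho(z)|\,|z|^s \dd z$ integrals are uniformly bounded in $s$ due to $\supp \rho \subset B_1$.
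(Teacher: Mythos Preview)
Your proof is correct and follows essentially the same route as the paper: write $\rho_\varepsilon * u - u$ as an integral against $\rho_\varepsilon$ of translation differences (respectively first-order Taylor remainders in the even case), pull the $L^p$ norm inside, apply \Cref{thm:main} (respectively \Cref{thm:fractional taylor} with $k=1$), and use $\supp\rho_\varepsilon\subset B_\varepsilon$ to extract $\varepsilon^s$. The only cosmetic difference is that the paper moves the norm inside via Jensen's inequality (treating $\rho_\varepsilon$ as a probability density) whereas you use Minkowski's integral inequality; your choice is arguably slightly cleaner since it covers $p=\infty$ and sign-changing $\rho$ without modification, but the argument is the same.
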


\begin{proof}
	Consider first the case
	$s \in \bc[\normalcolor0,1]$.
	Then, by Jensen's inequality and Theorem  \ref{thm:main} we have
	\begin{equation}\label{eq:mollif}
	\begin{split}
	\|\rho_\veps*u-u\|_{L^{p}(\Rd)}^p&= \int_{\Rd} \left|\int_{B_\veps} \left(u(x+y)-u(x)\right)\rho_\veps(y)\dd y\right|^p \dd x\\
	& \leq \int_{B_\veps
	} \rho_\veps(y) \int_{\Rd}  \left|u(x+y)-u(x)\right|^p \dd x \dd y\\
	&\leq \sup_{|y|\leq \veps} \|u(\cdot+y)-u\|_{L^p(\Rd)}^p\leq c^p \veps^{s p}
	[u]_{W^{s ,p}(\Rd)}^p.
	\end{split}
	\end{equation}

	Now, let $s\in(1,2]$. Since $\rho$ is even we have that
	\begin{equation*}
	\int_{\Rd}  \nabla u (x) \cdot y \rho_\ee (y) \diff y =  \nabla u (x) \cdot	\int_{\Rd}  y \rho_\ee (y) \diff y = 0.
	\end{equation*}
	Hence, we can 	introduce the extra term $(\nabla u(x)\cdot y)\rho_\veps(y)$ in the first inequality of  \eqref{eq:mollif}. Again, Jensen's inequality implies,
	\[
	\begin{split}
	\|\rho_\veps*u-u\|_{L^{p}(\Rd)}^p&= \int_{\Rd} \left|\int_{B_\veps} \left(u(x+y)-u(x)-\nabla u(x) \cdot y\right)\rho_\veps(y)\dd y\right|^p \dd x\\
	& \leq \sup_{|y|\leq \veps} \|u(\cdot+y)-P_1u(\cdot,y)\|_{L^p(\Rd)}^p.\\
	\end{split}
	\]
	which allows us to conclude the desired result by using
	\Cref{thm:fractional taylor} with $k=1$ and $t=s-1$.  The limit cases are obtained in a similar way.\normalcolor
\end{proof}

\begin{remark}
	The conclusion of \Cref{thm:rate of convergence of convolutions} also holds when
	$\rho \in L^1 (\Rd)$
	is such that maybe
	$\|\rho\|_{L^1(\Rd)} \ne 1$, $\supp \rho \subset B_R$
	and it
	changes signs,
	and the result reads
	\begin{equation*}
	\left \|\rho_\veps*u -  u \int_{\Rd} \rho(x) \diff x  \right \|_{L^{p}(\Rd)} \le C  \| \rho \|_{L^1 (\Rd)}
	[u]_{W^{s,p}(\Rd)}
	\normalcolor \veps^s\,,
	\end{equation*}
	where $C$ also depends  on $R$.
\end{remark}

\subsection{Rates of convergence of  finite differences \normalcolor  for $W^{s,p} (\Rd)$ functions}

One of the most classical numerical methods to solve differential equations is given by finite differences. This method approximates the derivatives by pointwise evaluations  of the function itself. \normalcolor
The error of this approximation is sometimes referred by the name of   \emph{consistency of the discretization}. Estimates for this error in terms of integer derivatives are easily obtained from the Taylor expansion, and hence  such a \normalcolor theory is well-known  for \normalcolor  $W^{k,p}(\Rd)$ with $k\in\N$.

\medskip

In the recent literature, there has been significant interest in the study of PDEs of non-local or fractional type, in which the solution usually only lies in a fractional Sobolev class of regularity $W^{s,p}$. This leads to be interested in the study of the consistency of this discrete derivatives in the fractional Sobolev setting.
It has been shown (see, e.g. \cite{Ci-etal18, dTEnJa18}) that numerical  discretizations \normalcolor for this kind of fractional problems can be  constructed \normalcolor as fractional powers of this local finite-difference approximations.
 We obtain \normalcolor the following fractional consistency estimates:
\begin{theorem}\label{cor:localdisc}
	Let $u\in W^{s,p}(\Rd)$ and $p\in(1,\infty)$. We have that
	\begin{enumerate}[(a)]
		\item\label{item-cor:localdisc-a} If $s\in (1,2]$ then
		\begin{equation*}
		\left \| \frac{u(\cdot+e_ih)-u}{h}  - \frac{\partial u }{\partial x_i} \right  \|_{L^p(\Rd)}\leq c
			[u]_{W^{s,p}(\Rd)}
		h^{s-1}.
		\end{equation*}
		\item\label{item-cor:localdisc-b} If $s \in (2,4]$ then
		\begin{equation*}
		\left \|\frac{u(\cdot+e_ih)+u(\cdot-e_ih) -2u}{h^2}  - \frac{\partial^2 u }{\partial x_i^2} \right  \|_{L^p(\Rd)}\leq c
		[u]_{W^{s,p}(\Rd)}
		 h^{s-2}.
		\end{equation*}
	\end{enumerate}
 Here, $\{e_i\}_{i=1}^n$ denotes  the standard basis of $\Rd$.
\end{theorem}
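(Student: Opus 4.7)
The strategy is to reduce both statements to \Cref{thm:fractional taylor} by identifying the left-hand sides as errors of Taylor expansions at suitable orders.

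For part \ref{item-cor:localdisc-a}, I would write $s = 1 + (s-1)$ with $s-1 \in (0,1]$ and observe that with $y = e_i h$ the Taylor polynomial of order one is
\begin{equation*}
P_1 u(x, e_i h) = u(x) + \partial_i u(x)\, h.
\end{equation*}
Hence the finite difference $u(x + e_i h) - u(x) - \partial_i u(x)\, h$ is exactly $u(\cdot + y) - P_1 u(\cdot, y)$ for $|y| = h$. Applying \Cref{thm:fractional taylor} with $k=1$ and fractional part $s-1$ gives an $L^p$-bound by $C [u]_{W^{s,p}(\Rd)} h^{s}$, and dividing by $h$ yields the claim.

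For part \ref{item-cor:localdisc-b}, the key observation is the symmetry of the centered difference: odd-order terms of a Taylor polynomial cancel when evaluated at $\pm e_i h$. Indeed, for any integer $k \ge 2$,
\begin{equation*}
P_k u(x, e_i h) + P_k u(x, -e_i h) \;=\; 2\sum_{\substack{0 \le m \le k \\ m \text{ even}}} \frac{\partial_i^m u(x)}{m!}\, h^m,
\end{equation*}
which, for $k \in \{2,3\}$, equals exactly $2u(x) + \partial_i^2 u(x)\, h^2$. Therefore
\begin{equation*}
u(x + e_i h) + u(x - e_i h) - 2 u(x) - \partial_i^2 u(x)\, h^2 \;=\; \bigl[u(x + e_i h) - P_k u(x, e_i h)\bigr] + \bigl[u(x - e_i h) - P_k u(x, -e_i h)\bigr].
\end{equation*}
I would then split $s \in (2,4]$ into two subcases: for $s \in (2,3]$ use $k=2$ with fractional part $s-2 \in (0,1]$; for $s \in (3,4]$ use $k=3$ with fractional part $s-3 \in (0,1]$. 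In both cases \Cref{thm:fractional taylor} applies to each summand separately, giving an $L^p$-bound $C [u]_{W^{s,p}(\Rd)} h^s$; the triangle inequality and division by $h^2$ finish the proof.

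The only subtlety is the need for the case split in (b) (because \Cref{thm:fractional taylor} requires the fractional part to lie in $[0,1]$), together with verifying that raising the Taylor order from $k=2$ to $k=3$ does not alter the centered sum — which is precisely the parity cancellation mentioned above. Everything else is routine bookkeeping.
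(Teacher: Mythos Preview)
Your proposal is correct and follows the same approach as the paper: part~\ref{item-cor:localdisc-a} is exactly the paper's argument (apply \Cref{thm:fractional taylor} with $k=1$, $y=e_ih$, divide by $h$), and for part~\ref{item-cor:localdisc-b} both you and the paper exploit the parity cancellation of odd Taylor terms so that the centered difference equals a sum of two Taylor remainders bounded by \Cref{thm:fractional taylor}. Your explicit split into $k=2$ for $s\in(2,3]$ and $k=3$ for $s\in(3,4]$ is slightly tidier than the paper's presentation, which writes a single third-order expansion for the whole range and leaves the bound on the individual remainders $R_\pm$ for $s\in(2,3]$ somewhat implicit---but the substance is identical.
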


\begin{proof}[Proof of \Cref{cor:localdisc}]
	We prove the result for $u \in C^\infty_c(\Rd)$, and the result holds in general by approximation.	
	First, we point out that \eqref{item-cor:localdisc-a} is precisely Theorem \ref{thm:fractional taylor} with $k=1$, $t=s-1$ and $y=h e_i$ and then dividing  by $h$.
	For  \eqref{item-cor:localdisc-b} we note that
	\begin{equation*}
	u(x\pm e_ih) = u(x) \pm h \frac{\partial u }{\partial x_i} (x) + \frac{h^2} 2  \frac{\partial^2 u }{\partial x_i^2} (x) \pm   \frac{h^3} 6 \frac{\partial^3 u }{\partial x_i^3} (x) +  R_\pm (x)
	\end{equation*}
	where, by \Cref{thm:fractional taylor}, we have $\| R_{\pm} (x)\|_{L^p (\Rd)} \le C h^{s}
	[u]_{W^{s,p}(\Rd)}
	 $ for $s \in (2, 4]$. Hence
	\begin{align*}
	\frac{u(x+e_ih)+u(x - e_i h)-2u(x)}{h^2} - \frac{\partial^2 u }{\partial x_i^2} (x)  &= \frac{1}{h^2} \left( R_{+} (x) - R_- (x) \right).
	\end{align*}
	This completes the proof.
\end{proof}
\begin{remark}
	Applying these techniques, one can recover similar estimates for any of the classical finite-difference approximations.
\end{remark}

\section{Proof of the main results}
\label{sec.thm}

The main technique we will use in the proof of \Cref{thm:main} is
the theory of interpolation spaces.
Interpolation techniques are delicate and are sometimes used carelessly. In order to be very precise, we will introduce several auxiliary definitions.
For $s >0$, we can define $W^{s,p} (\Rd)$ as the closure of $C_c^\infty (\Rd)$ in $L^p(\Rd)$  with respect to the $\| \cdot \|_{W^{s,p} (\Rd)}$ norm, or equivalently
\begin{equation*}
	W^{s,p} (\Rd) = \{  u \in L^p ( \Rd ) :  \| u \|_{W^{s,p} (\Rd)} < +\infty  \}.
\end{equation*}

The $K$-interpolation space  for $s \in (0,1)$ and $p\in [1,\infty)$  of two spaces $X_0$ and $X_1$  is  defined as
the elements in $X_0 + X_1$ which have finite
(normalised) $K$-interpolation norm
\begin{equation}
\label{eq:K interpolation}
\vertiii{u}_{(X_0, X_1)_{s,p; K}} \defeq \left( s (1-s) \int_0^{+\infty} \left( \frac{K(t,u)}{t^{s}} \right)^p \frac{\diff t}{t} \right)^{\frac 1 p},
\end{equation}
where
\begin{equation*}
K(t,u) = K(t,u,X_0, X_1) \vcentcolon= \inf \left\{   \|u_0\|_{X_0} +  t \| u_1 \|_{X_1} \, : \, u=u_0+u_1, \, u_0\in X_0, \, u_1\in X_1 \right\} .
\end{equation*}
In order for this definition to make sense, $X_0 \cap X_1$ must be nontrivial and $X_0 + X_1$ must be a topological vector space. It is then said that $\{X_0,X_1\}$ is an interpolation pair.
Notice that, for each $s$, removing $s(1-s)$ in \eqref{eq:K interpolation} gives an equivalent norm. In fact, most books (e.g. \cite{Adams+Fournier:2003sobolev+spaces,Bennet1988}) do not include this constant. However, Milman points out in  \cite{Milman2005} that the behaviour mentioned in \Cref{rem:normalised Gagliardo-Nirenberg} occurs in general, and this constant $s(1-s)$ allows to pass to the limits as $s \to 0,1$.

A well known interpolation result asserts that
\begin{equation}
\label{eq:sobolev as interpolation}
W^{s,p} (\Rd) = (L^p (\Rd), W^{1,p} (\Rd))_{s,p;K}
\end{equation}
via the $K$-interpolation
(see \Cref{rem:Tartar}).
A very complete but rather technical presentation of this kind of results can be found in \cite{Amann2000}.
Let us define
for $u \in C_c^\infty (\Rd)$ the (normalised) interpolation-like semi-norm
\begin{equation*}
\vertiii{u}_{s,p} \defeq \left( s (1-s) \int_0^{+\infty} \left( \frac{\overline K(t,u)}{t^{s}} \right)^p \frac{\diff t}{t} \right)^{\frac 1 p}
\end{equation*}
where
\begin{equation}\label{eq:DefKline}
	\overline K (t,u) = \inf_{u_1 \in C_c^\infty (\Rd)} \left\{   \|u - u_1\|_{L^p (\Rd)} +  t \|\nabla  u_1 \|_{L^p(\Rd)} \right\}.
\end{equation}
In \cite{Brasco2019} the authors prove that, for $u \in C_c^\infty (\Rd)$, this norm is equivalent to the (normalised) Gagliardo seminorm
\begin{equation}
\label{eq:Brasco}
\frac 1 C [ u ]_{W^{s,p} (\Rd)} \le \vertiii{u}_{s,p}  \le C [ u ]_{W^{s,p} (\Rd)},
\end{equation}
where $C$ depends only on $n$ and $p$.

 \begin{remark}
 	\label{rem:homogeneous Sobolev}
Some authors define a space $\dot W^{s,p} (\Rd)$ as the completion of $C_c^\infty(\R^n)$ with respect the semi-norm $[\cdot]_{W^{s,p}(\R^n)}$. In these terms, the results in \cite{Brasco2019} would imply
\[
\dot W^{s,p} (\Rd) \simeq (L^p (\Rd), \dot W^{1,p} (\Rd))_{s,p;K}.
\]
However, this introduces some functional analysis difficulties that we want to avoid here.
	We point out that the completion is a delicate process.
		Let
		$s \in (0,1]$ and	
		$sp > n$.
		Let $\varphi \in C_c^\infty (\Rd)$
		with value $1$ in $B_1$ and $0$ outside $B_2$ and $\varphi_m (x) = \varphi (x/m)$. By a simple change in variable, it is easy to see that $[ \varphi_m ] _{W^{s,p} (\Rd)} = m^{\frac n p - s}  [ \varphi ] _{W^{s,p} (\Rd)}$. We thus get $\varphi_m \to 1$ uniformly over compacts and  $[\varphi_m]_{W^{s,p} (\Rd)} \to 0$. Therefore, in the completion the zero function is equivalent to all constants.	
\end{remark}

\normalcolor

A key step in the proof is to use
\eqref{eq:Brasco}
 together
with
the fact that
$X_0 = (X_0, X_0)_{s,p;K}$ and the explicit value of (normalised) interpolation norm is given by
\begin{equation}
\label{eq:interpolated Lp norm}
\| u \|_{X_0} = p^{\frac 1 p} \vertiii{u}_{(X_0,X_0)_{s,p;K}}.
\end{equation}
(see \cite[Section 2.1]{Milman2005})
We now proceed to the proof of the main result.

\begin{proof}[Proof of \Cref{thm:main}]

	\noindent \textbf{Case 1:   $p = \infty$  {\rm and} $s\in[0,1]$.} This  is  a classical due to the definition of $W^{s,\infty}(\R^n)$.
	
	From here and in the following cases, we will assume that $u \in C_c^\infty (\Rd)$ and the results  in $W^{s,p} (\Rd)$ hold by approximation.
	
	\noindent \textbf{Case 2: $p \in (1,\infty)$ {\rm and} $s \in \{0,1\}$.}
	It is well known  (see for example \cite[Proposition 9.3]{Brezis:2010} for a clear proof) \normalcolor that, for $u \in   C_c^\infty(\Rd)$, we have
	\begin{equation}\label{eq:clasicaltrans}
	\|  u(\cdot+y) - u \|_{L^p (\Rd)} \le\| \nabla u \|_{L^p (\Rd)}  |y| \quad   \textup{and} \quad \|  u(\cdot+y) - u \|_{L^p (\Rd)} \le 2 \| u \|_{L^p (\Rd)}.
	\end{equation}
	\noindent \textbf{Case 3: \normalcolor $p \in (1,\infty)$ {\rm and}  $s \in (0,1)$.}
  For $y$ fixed, let $T : u \mapsto  u(\cdot+y) - u$.
	Then $Tu \in C_c^\infty (\Rd)$.  Now we partially reproduce the proof of \cite[Theorem 7.23]{Adams+Fournier:2003sobolev+spaces} (which we state later as \Cref{thm:exact interpolation}).  We write
	\begin{align*}
		K(t,Tu, L^p (\Rd), L^p(\Rd)) &= \inf \left\{   \| T u_0\|_{L^p(\Rd)} +  t \| T u_1 \|_{L^p (\Rd)} \, : \, u=u_0+u_1, \, u_i \in L^p (\Rd)\right\} \\
		&= \inf \left\{   \|Tu - Tu_1\|_{L^p(\Rd)} +  t \| Tu_1 \|_{L^p (\Rd)} \, : \, u_1 \in L^p (\Rd)\right\} \end{align*}

Actually, we can consider the infimum over the functions $u_1\in C_c^\infty(\Rd)$, since for a minimising sequence in $L^p(\Rd)$ we can construct another minimizing sequence in $C_c^\infty(\Rd)$ (for example by mollification). This observation together with  \eqref{eq:clasicaltrans} and the definition of $\overline{K}$ given in \eqref{eq:DefKline} lead to
		\begin{align*}
		K(t,Tu, L^p (\Rd), L^p(\Rd))&= \inf \left\{   \|Tu - Tu_1\|_{L^p(\Rd)} +  t \| Tu_1 \|_{L^p (\Rd)} \, : \, u_1 \in C_c^\infty (\Rd)\right\} \\
		&\le \inf \left\{  2 \|u - u_1\|_{L^p(\Rd)} +  t |y| \| \nabla u_1 \|_{L^p (\Rd)} \, : \, u_1 \in C_c^\infty (\Rd)\right\} \\
		&= 2  \inf \left\{  \|u -u_1\|_{L^p(\Rd)} +   \textstyle\frac{t |y|}2 \| \nabla u_1 \|_{L^p (\Rd)} \, : \, u_1 \in C_c^\infty (\Rd)\right\} \\
		&= 2 \overline K \left(\textstyle\frac{t |y|}2 , u \right).
	\end{align*}
	Inserting the previous estimate in the definition of $K$-interpolant norm given in \eqref{eq:K interpolation} and changing variables, we get
	\begin{align*}
		\vertiii{ u(\cdot+y) - u}_{(L^p (\Rd), L^p(\Rd))_{s,p;K}} &\le  \left( s (1-s) \int_0^{+\infty} \left( 2\overline K\left(\frac{t |y|}2 , u \right) \frac{1}{t^{s}} \right)^p \frac{\diff t}{t} \right)^{\frac 1 p}\\
	&\le 2^{1-s} |y|^s \left( s (1-s) \int_0^{+\infty} \left( \frac{\overline K\left(t , u \right)}{t^{s}} \right)^p \frac{\diff t}{t} \right)^{\frac 1 p}\\
	&=2^{1-s}   \vertiii{u}_{s,p} |y|^s.
	\end{align*}
	\normalcolor
	This,  together with identity \eqref{eq:interpolated Lp norm}, implies
	\begin{equation*}
	\begin{split}
	\| u(\cdot+y)- u \|_{L^p (\Rd)} &= p^{\frac{1}{p}} \vertiii{u(\cdot+y)- u}_{(L^p(\Rd), L^p(\Rd))_{s,p;K}} \\
	&\leq p^{\frac 1 p}2^{1-s}  \vertiii{u}_{s,p} |y|^s%
	\end{split}
	\end{equation*}
	From \cite[Proposition 4.5]{Brasco2019} we recover \eqref{eq:Brasco} with an explicit constant. More precisely, 	\begin{equation*}
	\vertiii{u}_{s,p} \le \frac{2n(n+1)}{(n\omega_n)^{\frac 1 p}} [u]_{W^{s,p} (\Rd)}
	\end{equation*}	
	Hence
	\begin{equation}
	\label{eq:intermediate}
		\| u(\cdot+y)- u \|_{L^p (\Rd)} \le \frac{2^{2-s} n(n+1)p^{\frac 1 p}}{(n\omega_n)^{\frac 1 p}}[u]_{W^{s,p} (\Rd)} |y|^s
	\end{equation}
		which is precisely \eqref{eq:fractional taylor less 1}.  We can estimate the constant uniformly in $s\in[0,1]$ and $p\in[1,\infty]$ by joining  \eqref{eq:clasicaltrans} and \eqref{eq:intermediate} to get
	\begin{equation}\label{eq:constantupperbound}
		\max \left \{ 2 ,  \frac{2^{2-s} n(n+1)p^{\frac 1 p}}{(n\omega_n)^{\frac 1 p}}  \right \} \le 4 n(n+1) e ^{\frac 1{e \omega_n n}} \definedby C(n),
	\end{equation}
	
	\medskip
	
	\noindent \textbf{Case 4:   $p = 1$ {\rm and}  $s \in [0,1]$.}  From the explicit values above, there is a finite limit as $p \to 1$. By the explicit formula of the Gagliardo seminorm\normalcolor, it is clear that for $u \in C_c^\infty (\Rd)$ we have $[u]_{W^{s,p} (\Rd)} \to [u]_{W^{s,1} (\Rd)}$ as $p \searrow 1$. %
\end{proof}

\begin{remark}\label{rem:contantn}
Note that in \eqref{eq:constantupperbound}, we have  $n\omega_n  \to 0$ as $n\to\infty$,  and thus $C(n)$ diverges as $n\to\infty$, which does not allow us to get a uniform bound in $n$.
\end{remark}
 We are now ready to prove  our result regarding higher-order Sobolev spaces.

\begin{proof}[Proof of  \Cref{thm:fractional taylor}]

 	Let $k = 1$. For $u \in  \mathcal C_c^2 (\Rd)$ we write the Taylor expansion
 	\begin{equation*}
 	u(x+y) = u(x) + \int_0^{1} \nabla u (x + t y) \cdot y \diff t .
 	\end{equation*}
 	Therefore,
 	\begin{equation*}
 	 u (x+y\normalcolor) - (u(x) + \nabla u (x) \cdot y) = \int_0^1 \left( \nabla u (x + ty) -  \nabla u(x) \right)  \cdot y \diff t\\.
 	\end{equation*}
 Integrating over $\Rd$, \normalcolor applying Jensen's inequality and Fubini's theorem we have
 	\begin{align*}
 	\int_{\Rd} | u (x+y\normalcolor) - (u(x) + \nabla u (x) \cdot y) |^p \diff x &\le \int_{ \Rd} \int_0^1  |\nabla u (x + ty) -  \nabla u(x) |^p |y|^p\diff t  \diff x\\
 	&\le |y|^p  \int_0^1  \|  \nabla u (\cdot+ty)\normalcolor - \nabla u \|_{L^p (\Rd)}^p \diff t\\
 	&= |y|^p \sup_{t \in [0,1]}   \| \nabla u(\cdot+ty) - \nabla u \|_{L^p (\Rd)}^p\,.
 	\end{align*}
 	Since by definition of the fractional Sobolev spaces, $\nabla u \in W^{s-1,p} (\Rd)^n $ we apply \Cref{thm:main} to recover
 	\begin{equation*}
 	\int_{ \Rd} | u(x+ y)\normalcolor - (u(x) - \nabla u (x) \cdot y) |^p \diff x \le C |y|^p |y|^{p(s-1)} \| \nabla u \|_{W^{s-1,p} (\Rd)}^p.
 	\end{equation*}
  	The  cases when $k > 1$ are  proved by induction and an analogous argument.	
\end{proof}

For the reader's convenience we recall
an
interpolation result that can be found in \cite{Adams+Fournier:2003sobolev+spaces},
since we apply part of its proof above.
\begin{theorem}[Theorem 7.23(a) in \cite{Adams+Fournier:2003sobolev+spaces}]
	\label{thm:exact interpolation}
	Let $\{X_0,X_1\}$ and $\{Y_0, Y_1\}$ be two interpolation pairs. Let, for $\theta \in (0,1)$ and $q \in [1,\infty)$
	\begin{equation*}
	X_\theta = (X_0, X_1)_{\theta, q;K}, \qquad Y_\theta = (Y_0, Y_1)_{\theta, q;K }
	\end{equation*}
	with their $K$-interpolation norms.
	Then, for any $T: X_0 + X_1 \to Y_0 + Y_1$ such that it is linearly continuous $T: X_i \to Y_i$
	we have that
	\begin{equation*}
	\| T \|_{ \mathcal L (  X_\theta, Y_\theta    )    } \le 	\| T \|_{ \mathcal L (  X_0, Y_0    )    }^{1-\theta} \| T \|_{ \mathcal L (  X_1, Y_1    )    }^{\theta} .
	\end{equation*}
\end{theorem}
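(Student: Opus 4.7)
The plan is to prove the operator-norm inequality directly from the definition of the $K$-functional, by showing the pointwise-in-$t$ bound
\[
K(t,Tu,Y_0,Y_1) \,\le\, M_0\, K\!\left(\tfrac{M_1}{M_0}\,t,\,u,\,X_0,X_1\right),
\]
where $M_i \defeq \|T\|_{\mathcal L(X_i,Y_i)}$, and then integrating this bound against $t^{-\theta q}\,dt/t$ and performing a simple rescaling to extract the factor $M_0^{1-\theta}M_1^{\theta}$.

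First I would fix $u \in X_\theta$ and an arbitrary admissible decomposition $u = u_0 + u_1$ with $u_i \in X_i$. Since $T$ is linear and $T:X_0+X_1\to Y_0+Y_1$, we have $Tu = Tu_0 + Tu_1$ with $Tu_i \in Y_i$, so by the definition of the $K$-functional on the target pair
\[
K(t,Tu,Y_0,Y_1) \;\le\; \|Tu_0\|_{Y_0} + t\,\|Tu_1\|_{Y_1} \;\le\; M_0\|u_0\|_{X_0} + t\,M_1\|u_1\|_{X_1}.
\]
Factoring $M_0$ out (assuming for the moment both $M_0,M_1>0$) and taking the infimum over admissible decompositions of $u$ yields the key pointwise estimate
\[
K(t,Tu,Y_0,Y_1) \;\le\; M_0\,K\!\left(\tfrac{M_1}{M_0}\,t,\,u,\,X_0,X_1\right).
\]

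Next I insert this into the definition of the $K$-interpolation norm on $Y_\theta$. The normalising factor $\theta(1-\theta)$ appears identically on both sides and cancels, so effectively
\[
\|Tu\|_{Y_\theta}^{q} \;\le\; M_0^{q}\int_0^\infty \left( \frac{K\!\left(\tfrac{M_1}{M_0}t,u\right)}{t^{\theta}} \right)^{\!q} \frac{dt}{t}.
\]
Changing variables to $s = (M_1/M_0)\,t$, for which $ds/s = dt/t$ and $t^{-\theta} = (M_1/M_0)^{\theta}\,s^{-\theta}$, the right-hand side becomes $M_0^{q(1-\theta)}M_1^{q\theta}\,\|u\|_{X_\theta}^{q}$. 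Taking $q$-th roots and the supremum over $u$ with $\|u\|_{X_\theta}\le 1$ gives the claimed operator-norm inequality.

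I do not expect a genuine obstacle here; the only point requiring a short remark is the degenerate case in which $M_0=0$ or $M_1=0$. If $M_0=0$ then $Tu_0=0$ for every $u_0\in X_0$, so directly $K(t,Tu,Y_0,Y_1)\le t\,M_1\|u_1\|_{X_1}$ for every admissible $u_1$, and the same change of variables (with an arbitrary positive scale, letting it tend to $0$ or simply comparing against $tM_1\|u\|_{X_1}$) shows $\|T\|_{\mathcal L(X_\theta,Y_\theta)}=0$, consistent with the convention $0^{1-\theta}\cdot M_1^{\theta}=0$; the case $M_1=0$ is symmetric. Thus the inequality holds in all cases.
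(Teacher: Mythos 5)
Your proof is correct and is essentially the standard argument that the paper itself relies on: the pointwise bound $K(t,Tu,Y_0,Y_1)\le M_0\,K\bigl(\tfrac{M_1}{M_0}t,u,X_0,X_1\bigr)$ followed by the change of variables $s=\tfrac{M_1}{M_0}t$ is exactly the computation the paper partially reproduces (with $M_0=2$, $M_1=|y|$) in Case 3 of the proof of \Cref{thm:main}, and the normalising factor $\theta(1-\theta)$ cancels just as you say. Your handling of the degenerate case $M_0=0$ or $M_1=0$ (replacing the vanishing norm by an arbitrary $\ee>0$ and letting $\ee\to0$) is also fine.
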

\normalcolor

\section{Comments and open problems}
\label{sec:comments}
\begin{enumerate}

	\item Some general Rellich-Kondrachov embeddings for different $q$ can also be obtained by interpolation. 	

	\item
	By extending the argument in \Cref{thm:Rellich-Kondrachov}, it is possible to prove the general Sobolev compact embedding theorem, i.e. possible to prove that $0 \le t < s$ the space $W^{s,p} (\Rd)$ is compactly embedded in $W^{t,q} (\Omega)$ if $\frac{1}p - \frac s n < \frac 1 q - \frac t n$.
	
	\item In \cite{Brezis} the authors fix a ``defect'' of the Gagliardo at $s =1$ by replacing the integration by a norm in a Marcinkiewicz space. It is interesting to know if the results in this paper could adapt to those norms.
	
	\item Our results apply to domains $\Omega$, as long as there is an extension operator $E : W^{s,p} (\Omega) \to W^{s,p} (\Rd)$.
	A nice discussion of the fractional Sobolev in bounded domains and the existence of this extensions can be found, e.g., in \cite{DiNezza2012}.

	\item The convergence of the convolution under suitable hypotheses for $\rho_\veps$ in \Cref{thm:rate of convergence of convolutions} for $s > 2$ is left open.
	This kind of result have been studied for $p = 2$ by Fourier transform.

	\item  Discretizations of nonlocal operators like the fractional Laplacian $(-\Delta)^s$ based on Lagrange interpolants (see \cite[Section 4.4]{dTEnJa18}) can be shown to have a consistency error in $L^p(\Rd)$ of order $h^{\sigma-2s}$ for $W^{\sigma,p}(\Rd)$. This topic will be thoroughly studied in the forthcoming paper \cite{dTGCVa20}.
	
	\item When $s=\{0,1\}$, the constant in \Cref{thm:main} does not depend on the dimension  either. We wonder if the dependence on $n$ can be dropped also when $s\in(0,1)$ (see \Cref{rem:contantn}). More generally, the calculation of the optimal constant for $s\in(0,1)$ and variable $p$ is an important question.

\end{enumerate}

\section*{Acknowledgements}
F. del Teso was partially supported by PGC2018-094522-B-I00 from the MICINN, of the Spanish Government.
The work of D. G\'omez-Castro  and J. L. V\'azquez were funded by  grant PGC2018-098440-B-I00 from  the  MICINN,  of the Spanish Government. J.~L.~V\'azquez is an Honorary Professor at Univ.\ Complutense de Madrid. The authors are thankful to L. Brasco for
information and useful comments.

\printbibliography

\end{document}